\numberwithin{equation}{section}
\newtheorem{lem}{Lemma}[section]
\newtheorem{dee}{Definition}[section]
\newtheorem{prop}{Proposition}
\newtheorem{thm}{Theorem}[section]
\let\cal=\mathcal
\let\bb=\mathbb
\let\eqre=\eqref
\def\endoc{\end{document}}
\def\beq*{\begin{equation*}}
 \def\eneq*{\end{equation*}}
\begin{document}
\date{\today}

\bibliographystyle{plain}

\title[Regularity of semigroup ... coupled elastic systems]{Regularity of the semigroups associated with some damped coupled elastic systems II: a nondegenerate fractional damping case}
\author[Ka\"is Ammari]{Ka\"is Ammari}
\address{UR Analysis and Control of PDEs, UR 13ES64, Department of Mathematics, Faculty of\\ \indent Sciences of Monastir, University of Monastir, 5019 Monastir, Tunisia}
\email{kais.ammari@fsm.rnu.tn}

\author[Farhat Shel]{Farhat Shel}
\address{UR Analysis and Control of PDEs, UR 13ES64, Department of Mathematics, Faculty of\\ \indent Sciences of Monastir, University of Monastir, 5019 Monastir, Tunisia}
\email{farhat.shel@fsm.rnu.tn}

\author[Louis Tebou]{Louis Tebou}
\address{Department of Mathematics and Statistics, Florida International University,
Modesto Maidique\\ \indent Campus, Miami, Florida 33199, USA}
\email{teboul@fiu.edu}
 
\begin{abstract}
In this paper, we examine regularity issues for two damped abstract elastic 
 systems; the damping and coupling involve fractional powers $\mu, \theta$, with $0 \leq \mu , \theta \leq 1$, of the principal operators. The matrix defining the coupling and damping is nondegenerate. This new work is a sequel to the degenerate case that we discussed recently in \cite{kfl}.  First, we prove that for $1/2 \leq \mu , \theta \leq 1$, the underlying semigroup is analytic. Next, we show that for $\min(\mu,\theta) \in (0,1/2)$, the semigroup is of certain Gevrey classes. Finally, some examples of application are provided.
\end{abstract}

\subjclass[2010]{47D06, 35B40}
\keywords{ Semigroup regularity, fractional damping, structural damping, Kelvin-Voigt damping, coupled elastic systems}

\maketitle
\tableofcontents

\section{Introduction}  \label{sec1}
Let $H$ be a Hilbert space with inner product $\left\langle .,.\right\rangle $ and norm $|.|$. Let $A_1$ and $A_2$ be two self-adjoint on the Hilbert space $H$, strictly positive, with dense domains $D(A_1)$ and $D(A_2)$ respectively. 
Let $B_1$ and $B_2$ be other self-adjoint, positive operators on the Hilbert space $H$, with dense domains $D(B_1)$ and $D(B_2)$ respectively, satisfying for some positive constants $\alpha_0$, $\alpha_1$, $\alpha_2$, $\beta_1$, $\beta_2$ and $\mu, \theta \in (0,1]$,
\begin{eqnarray*}
B_1\leq \alpha_0B_2, \label{b12'}\\
\beta_1A_2^\theta\leq B_2\leq \beta_2A_2^\theta,\label{b1'}\\
\beta_1A_2^\theta\leq B_2\leq \beta_2A_2^\theta, \label{b2'}
\end{eqnarray*}
in the sense that,
\begin{eqnarray}
|B_1^\frac{1}{2}u|^2\leq\alpha_0|B_2^\frac{1}{2}u|^2,\;\;\;u\in D(B_2^\frac{1}{2})\subseteq D(B_1^\frac{1}{2}),\label{b12}\\
\alpha_1|A_1^\frac{\mu}{2}u|^2\leq |B_1^\frac{1}{2}u|^2\leq \alpha_2|A_1^\frac{\mu}{2}u|^2,\;\;\;u\in D(B_1^\frac{1}{2})=D(A_1^\frac{\mu}{2}),\label{b1}\\
\beta_1|A_2^\frac{\theta}{2}u|^2\leq |B_2^\frac{1}{2}u|^2\leq \beta_2|A_2^\frac{\theta}{2}u|^2,\;\;\;u\in D(B_2^\frac{1}{2})=D(A_2^\frac{\theta}{2}).\label{b2}
\end{eqnarray}
Note that from (\ref{b12})-(\ref{b2}), one has $D(A_2^\frac{\theta}{2})\subseteq D(A_1^\frac{\mu}{2})$.
 
 Set $V_j=D(A_j^\frac{1}{2})$, $j=1,~2$. We assume that for each $j=1,2$, $V_j\hookrightarrow H\hookrightarrow V_j'$, each injection being dense and compact, where $V_j'$ denotes the topological dual of $V_j$.

\medskip

Let  $\alpha$ and $\gamma$ be positive constants, and let $\beta$ be a nonzero real constant.
\medskip

Consider the evolution system
\begin{equation}\label{e1}\begin{array}{lll} 
&y_{tt}+A_1y+\alpha B_1y_t+\beta B_1z_t=0\text{ in }(0,\infty)\\
&z_{tt}+A_2z+\beta B_1y_t+\gamma B_2z_t=0\text{ in }(0,\infty)\\
&y(0)=y^0\in V_1,\quad y_t(0)=y^1\in H,\quad z(0)=z^0\in V_2,\quad z_t(0)\in H.\end{array}\end{equation}
Introduce the Hilbert space ${\cal H}=V_1\times H\times V_2\times H$, over the field ${\bb C}$ of complex numbers, equipped with the norm
$$||Z||^2=|A_1^\frac{1}{2}u|^2+|v|^2+|A_2^\frac{1}{2}w|^2+|z|^2,\quad\forall Z=(u,v,w,z)\in{\cal H}.$$
Define the operator 
\begin{equation*}\label{eac}
{\cal A}_{\mu,\theta}=\left( {\begin{array}{cccc}  
0&I&0&0\\  
-A_1 & -\alpha B_1 &0&-\beta B_1  \\
0&0&0&I\\
0 &-\beta B_1&-A_2  & -\gamma B_2\\ 
\end{array}} \right)
\end{equation*}
 with domain
$$D({\cal A})=\Big\{(u,v,w,z)\in V_1\times V_1
\times
V_2\times V_2;~ A_1u+\alpha B_1v+\beta B_1z\in H,\text { and } A_2w+\beta B_1v+\gamma B_2z\in H \Big\}.$$
Then, by denoting $v=u_t$ and $z=w_t$, system (\ref{e1}) can be rewritten as an abstract linear evolution equation on the Hilbert space $\mathcal{H}$,
\begin{equation}
\left\{ 
\begin{array}{c}
\frac{dU}{dt} (t) =\mathcal{A}_{\mu,\theta}U,\;\;\;\;t\geq  0 \\ 
U(0)=(u^0,u^1,z^0, z^1)
\end{array}
\right.  \label{abs}
\end{equation} 
In the sequel we suppose that the constants $\alpha$, $\beta$, $\alpha_0$ and $\gamma$ satisfy the following inequality
\begin{equation}
\alpha \gamma >\beta^2\alpha_0 \label{cod-dis}
\end{equation}
We have,
\begin{prop}
The operator ${\cal A}_{\mu,\theta}$ is dissipative.
\end{prop}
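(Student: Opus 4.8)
The plan is to check the dissipativity criterion in the energy space $\cal H$, namely that $\mathrm{Re}\,\langle {\cal A}_{\mu,\theta}Z,Z\rangle\le 0$ for every $Z=(u,v,w,z)\in D({\cal A})$, where $\langle\cdot,\cdot\rangle$ denotes the inner product associated with the norm $\|\cdot\|$. First I would write out the action of the operator as ${\cal A}_{\mu,\theta}Z=(v,\,-A_1u-\alpha B_1v-\beta B_1z,\,z,\,-\beta B_1v-A_2w-\gamma B_2z)$ and expand $\langle {\cal A}_{\mu,\theta}Z,Z\rangle$ along the four components of the energy inner product, using the self-adjointness of $A_1^{1/2}$ and $A_2^{1/2}$ to rewrite $\langle A_1^{1/2}v,A_1^{1/2}u\rangle=\langle v,A_1u\rangle$ and $\langle A_2^{1/2}z,A_2^{1/2}w\rangle=\langle z,A_2w\rangle$.

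The structural point is that the terms carrying $A_1$ and $A_2$ assemble into the purely imaginary quantities $\langle v,A_1u\rangle-\langle A_1u,v\rangle$ and $\langle z,A_2w\rangle-\langle A_2w,z\rangle$, which vanish once the real part is taken; thus only the damping and coupling terms survive. Combining the two coupling contributions through the self-adjointness of $B_1$, namely $\langle B_1z,v\rangle+\langle B_1v,z\rangle=2\,\mathrm{Re}\,\langle B_1^{1/2}z,B_1^{1/2}v\rangle$, I obtain
$$\mathrm{Re}\,\langle {\cal A}_{\mu,\theta}Z,Z\rangle=-\alpha\,|B_1^{1/2}v|^2-\gamma\,|B_2^{1/2}z|^2-2\beta\,\mathrm{Re}\,\langle B_1^{1/2}z,B_1^{1/2}v\rangle.$$

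The crux is to dominate the indefinite coupling term by the two dissipative squares. Writing $X=|B_1^{1/2}v|$ and $Y=|B_2^{1/2}z|$, Cauchy--Schwarz together with the comparison \eqref{b12} gives $-2\beta\,\mathrm{Re}\,\langle B_1^{1/2}z,B_1^{1/2}v\rangle\le 2|\beta|\,|B_1^{1/2}v|\,|B_1^{1/2}z|\le 2|\beta|\sqrt{\alpha_0}\,XY$, whence
$$\mathrm{Re}\,\langle {\cal A}_{\mu,\theta}Z,Z\rangle\le-\bigl(\alpha X^2-2|\beta|\sqrt{\alpha_0}\,XY+\gamma Y^2\bigr).$$
The quadratic form on the right is nonnegative for all $X,Y\ge 0$ exactly when its discriminant satisfies $\beta^2\alpha_0\le\alpha\gamma$, which is precisely the standing assumption \eqref{cod-dis}; equivalently, one applies Young's inequality $2XY\le tX^2+t^{-1}Y^2$ with $t$ chosen in the nonempty interval $[\,|\beta|\sqrt{\alpha_0}/\gamma,\ \alpha/(|\beta|\sqrt{\alpha_0})\,]$. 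This yields $\mathrm{Re}\,\langle {\cal A}_{\mu,\theta}Z,Z\rangle\le 0$, proving dissipativity. I expect the only delicate step to be the sign bookkeeping for complex inner products that isolates the single real cross term; after that, hypothesis \eqref{cod-dis} does exactly the work of rendering the residual quadratic form nonnegative.
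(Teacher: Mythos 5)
Your proposal is correct and follows essentially the same route as the paper: expand $\Re\langle{\cal A}_{\mu,\theta}Z,Z\rangle$ so that the conservative $A_1,A_2$ terms cancel, leaving $-\alpha|B_1^{1/2}v|^2-\gamma|B_2^{1/2}z|^2-2\beta\Re\langle B_1^{1/2}v,B_1^{1/2}z\rangle$, then absorb the cross term using \eqref{b12} and Young's inequality, with \eqref{cod-dis} guaranteeing the resulting quadratic form is nonnegative. The only cosmetic difference is that the paper keeps a free Young parameter $c\in(|\beta|/\alpha,\,\gamma/(\alpha_0|\beta|))$ so as to record the strictly coercive bound \eqref{dis} (with positive constants $k_1,k_2$) that is reused later in the analyticity proof, whereas your discriminant formulation stops at $\le 0$.
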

\begin{proof}
One easily checks that for every $Z=(u,v,w,z)\in D({\cal A}_{\mu,\theta})$, 
\begin{equation*}\begin{array}{lll}
\Re({\cal A}_{\mu,\theta}Z,Z)&=-\alpha |B_1^\frac{1}{2}v|^2-2\beta\Re(B_1
^\frac{1}{2}v,B_1^\frac{1}{2}z)-\gamma|B_2^\frac{1}{2}z|^2\\
&\leq-\alpha |B_1^\frac{1}{2}v|^2+2|\beta||B_1^\frac{1}{2}v||B_1^\frac{1}{2}z|-\gamma|B_2^\frac{1}{2}z|^2\\
&\leq-\alpha |B_1^\frac{1}{2}v|^2+\frac{|\beta|}{c}|B_1^\frac{1}{2}v|^2+\alpha_0|\beta| c |B_2^\frac{1}{2}z|^2-\gamma|B_2^\frac{1}{2}z|^2,\end{array}
\end{equation*}
for some positive constant $c$.
 Then \begin{equation}
\Re({\cal A}_{\mu,\theta}Z,Z)\leq-(\alpha-\frac{|\beta|}{c}) |B_1^\frac{1}{2}v|^2-(\gamma-\alpha_0|\beta| c) |B_2^\frac{1}{2}z|.\label{dis}
\end{equation} 
Since $\alpha \gamma >\beta^2\alpha_0$, it is possible to choose $c$ such that  $k_1:=\alpha-\frac{|\beta|}{c}>0$ and $k_2:=\gamma-\alpha_0|\beta| c>0$ (equivalently $\frac{|\beta|}{\alpha}<c<\frac{\gamma}{\alpha_0|\beta|}$).
So that the operator ${\cal A}_{\mu,\theta}$ is dissipative.
\end{proof}

 Further, the operator ${\cal  A}_{\mu,\theta}$ is densely defined, so ${\cal A}_{\mu,\theta}$ is closable on ${\cal H}$. Therefore, the Lumer-Phillips Theorem shows that the operator ${\cal A}_{\mu,\theta}$ generates a strongly continuous semigroup of contractions $(S(t))_{t\geq0}$ on the Hilbert space ${\cal H}$, which leads to the well-posedness of the system (\ref{e1}). 

Moreover, as in \cite{kfl}, the operator ${\cal A}_{\mu,\theta}$ satisfies
\begin{equation}\label{sst}
i\mathbb{R}\subset \rho({\cal A}_{\mu,\theta})
\end{equation}
where $\rho({\cal A}_{\mu,\theta})$ denotes the resolvent set of ${\cal A}_{\mu,\theta}$.\\
As a consequence, the semigroup $e^{t\mathcal{A}}$ is strongly stable \cite{ArBa}.

Our main goal is to study some regularity properties for the solutions of the system (\ref{abs}).

Before going on, let us recall some definitions relevant to the regularity of $C_0$-semigroups.
\begin{dee}
 Let $T(t):=e^{t\mathcal{A}}$ be a $C_0$-semigroup on a Hilbert space $\mathcal{H}$.
\begin{enumerate}
\item The semigroup $T(t)$ is said to be \textit{analytic} if
\begin{itemize}
\item for some $\varphi \in (0,\frac{\pi}{2})$, $T(t)$  can be extended to $\Sigma_\varphi$, where
$$\Sigma_\varphi=\{0\}\cup \{\tau \in \mathbb{C} \;:\;|\mathrm{arg}(\tau)|<\varphi \},$$
so that for any $x\in \mathcal{H}$, $\tau\mapsto T(\tau)x$ is continuous on $\Sigma_\varphi$, and for each $\tau_1,\tau_2 \in \Sigma_\varphi$, $T(\tau_1+\tau_2)=T(\tau_1)T(\tau_2)$. 
\item The map $\tau \mapsto T(\tau)$ is analytic over $\Sigma_\varphi\setminus \{0\}$, in the sense of the uniform operator topology of $\mathcal{L}(\mathcal{H})$.
\end{itemize}
\item The semigroup $e^{t\mathcal{A}}$ is said to be \textit{differentiable} if for any $x\in\mathcal{H}$, $t\mapsto e^{t\mathcal{A}}x$ is differentiable on $(0,\infty)$.
\item The semigroup $e^{t\mathcal{A}}$ is said to be of \textit{Gevrey} class $\delta$ (with $\delta >1$) if $e^{t\mathcal{A}}$ is infinitely differentiable and for any compact subset $\mathcal{K}\subset (0,\infty)$ and any $\lambda>0$
, there exists a constant $C=C(\lambda,\mathcal{K})$ such that
$$\|\mathcal{A}^ne^{t\mathcal{A}}\|_{\mathcal{L}(\mathcal{H})}\leq C\lambda^n(n!)^\delta,\;\;\;\;\forall\;t\in\mathcal{K},\;n\geq 0.$$

\end{enumerate}
\end{dee}
In this paper we will use the following standard results to identify analytic or Gevrey class semigroups, based on the estimation for the resolvent of the generator of the semigroup.
\begin{lem}\cite{LiZh}\label{lem1}
Let $\mathcal{A}:D(\mathcal{A})\subset \mathcal{H}\rightarrow \mathcal{H}$ generate a $C_0$-semigroup of contractions $e^{t\mathcal{A}}$ on $\mathcal{H}$.
 Suppose that
\begin{equation}\label{asym}
i\mathbb{R}\subset \rho(\mathcal{A})
\end{equation}
where $\rho(A)$ denotes the resolvent of $\mathcal{A}$.

The semigroup $e^{t\mathcal{A}}$ is analytic if and only if
\begin{equation}
\label{ana}\limsup_{|\lambda|\rightarrow\infty}|\lambda|\|(i\lambda I-\mathcal{A})^{-1}\|_{\mathcal{L}(\mathcal{H})}<\infty.
\end{equation}
\end{lem}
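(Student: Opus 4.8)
The statement is the classical resolvent characterization of analyticity for a contraction semigroup, and the plan is to reduce it to the standard sector criterion: a bounded $C_0$-semigroup is analytic if and only if its generator has a resolvent satisfying $\|(\lambda I-\mathcal{A})^{-1}\|\leq C/|\Im\lambda|$ on the right half-plane for large $|\Im\lambda|$. The crucial extra input for a contraction semigroup is that, by the Hille--Yosida estimate,
\[
\{\lambda:\Re\lambda>0\}\subset\rho(\mathcal{A}),\qquad \|(\lambda I-\mathcal{A})^{-1}\|_{\mathcal{L}(\mathcal{H})}\leq\frac{1}{\Re\lambda},
\]
so the resolvent is already controlled well inside the right half-plane and only its behaviour near the imaginary axis needs to be analysed.

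The direction ``analytic $\Rightarrow$ \eqref{ana}'' is immediate: an analytic semigroup bounded on a sector $\Sigma_\varphi$ has a generator whose resolvent is bounded by $C/|\lambda|$ on the larger sector $\{|\arg\lambda|<\tfrac{\pi}{2}+\varphi\}$, which strictly contains the punctured imaginary axis; evaluating at $\lambda=i\beta$ gives $|\beta|\,\|(i\beta I-\mathcal{A})^{-1}\|\leq C$, i.e. \eqref{ana}.

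For the converse I would use \eqref{ana} to fix $M,R>0$ with $\|(i\beta I-\mathcal{A})^{-1}\|\leq M/|\beta|$ for $|\beta|\geq R$, and then propagate this bound off the imaginary axis by a Neumann series. For $\lambda=\sigma+i\beta$ one has the factorisation
\[
\lambda I-\mathcal{A}=(i\beta I-\mathcal{A})\bigl[I+\sigma(i\beta I-\mathcal{A})^{-1}\bigr],
\]
whose bracket is invertible with norm of its inverse at most $2$ as soon as $|\sigma|\leq|\beta|/(2M)$. Hence such $\lambda$ lie in $\rho(\mathcal{A})$ and
\[
\|(\lambda I-\mathcal{A})^{-1}\|\leq 2\,\|(i\beta I-\mathcal{A})^{-1}\|\leq\frac{2M}{|\beta|}\leq\frac{2M+1}{|\lambda|},
\]
the last step using $|\lambda|\leq|\beta|\bigl(1+\tfrac{1}{2M}\bigr)$. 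This yields the $C/|\Im\lambda|$ bound on a strip that crosses the imaginary axis; on the complementary part $\sigma>|\beta|/(2M)$ of the right half-plane the Hille--Yosida estimate gives $\|(\lambda I-\mathcal{A})^{-1}\|\leq 1/\sigma<2M/|\beta|$, and the compact piece $i[-R,R]\subset\rho(\mathcal{A})$ furnished by \eqref{asym} handles the low frequencies. Combining the three regions produces a uniform estimate $\|(\lambda I-\mathcal{A})^{-1}\|\leq C'/|\Im\lambda|$ valid on the right half-plane for all large $|\Im\lambda|$, and the sector criterion then delivers analyticity.

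The analytic ingredients here (Neumann series, Hille--Yosida) are routine; I expect the only real obstacle to be the bookkeeping that stitches the widening strip around the imaginary axis, the interior of the right half-plane, and the bounded low-frequency neighbourhood into a single sectorial resolvent bound of exactly the form demanded by the analytic-semigroup criterion.
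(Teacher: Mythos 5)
The paper does not actually prove this lemma: it is quoted from Liu--Zheng \cite{LiZh}, so there is no in-text argument to compare against, and your proof is the standard one found there (Hille--Yosida control of the resolvent in the open right half-plane, a Neumann-series extension of the bound \eqref{ana} across the imaginary axis into a sector of half-angle $\arctan(1/2M)$, and the compact low-frequency piece supplied by \eqref{asym}); the reasoning and the arithmetic check out. The only gloss is in the ``only if'' direction: the paper's definition of analyticity does not assume uniform boundedness of $T(\tau)$ on subsectors, so before invoking the sectorial resolvent bound you should note that strong continuity on $\Sigma_\varphi$ plus the uniform boundedness principle and the semigroup law give $\|T(\tau)\|\le Me^{\omega|\tau|}$ on each closed subsector, which yields $\|(i\beta I-\mathcal{A})^{-1}\|\le C/|\beta|$ for $|\beta|$ large --- enough for \eqref{ana}, since it only concerns the limsup at infinity.
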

\begin{lem}\cite{taylor}\label{lem2}
Let $\mathcal{A}:D(\mathcal{A})\subset \mathcal{H}\rightarrow \mathcal{H}$ generate a bounded $C_0$-semigroup  $e^{t\mathcal{A}}$ on $\mathcal{H}$. Suppose that $\mathcal{A}$ satisfies the following estimate, for some $0<\alpha<1$,

\begin{equation}
\label{ger}\limsup_{|\lambda| \rightarrow\infty}|\lambda|^\alpha \|(i\lambda I-\mathcal{A})^{-1}\|_{\mathcal{L}(\mathcal{H})}<\infty.
\end{equation}
Then $e^{t\mathcal{A}}$ is of Gevrey class $\delta$ for $t>0$, for every $\delta>\frac{1}{\alpha}$.
\end{lem}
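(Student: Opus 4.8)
The plan is to represent the semigroup and its powers $\mathcal{A}^n e^{t\mathcal{A}}$ through a Dunford--Riesz contour integral and to trade the polynomial resolvent decay \eqref{ger} for factorial growth control on those powers. Write $s=\sigma+i\lambda$; the hypothesis \eqref{ger} provides constants $C,M>0$ with $\|(i\lambda I-\mathcal{A})^{-1}\|_{\mathcal{L}(\mathcal{H})}\le C|\lambda|^{-\alpha}$ for $|\lambda|\ge M$, while boundedness of the semigroup (bound $M_0$) gives the Hille--Yosida estimate $\|(sI-\mathcal{A})^{-1}\|_{\mathcal{L}(\mathcal{H})}\le M_0/\sigma$ for $\sigma>0$.

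First I would enlarge the resolvent set to the left of the imaginary axis by a Neumann series. From $sI-\mathcal{A}=(i\lambda I-\mathcal{A})\bigl[I+\sigma(i\lambda I-\mathcal{A})^{-1}\bigr]$, the bracketed factor is invertible whenever $|\sigma|\,\|(i\lambda I-\mathcal{A})^{-1}\|<1$, hence on
$$\Omega=\Bigl\{s=\sigma+i\lambda:\ |\lambda|\ge M,\ |\sigma|\le \tfrac{1}{2C}\,|\lambda|^{\alpha}\Bigr\},$$
where the resolvent persists with the \emph{same} power bound $\|(sI-\mathcal{A})^{-1}\|_{\mathcal{L}(\mathcal{H})}\le 2C|\lambda|^{-\alpha}$. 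Thus $\rho(\mathcal{A})$ contains a region whose left boundary is the curve $\sigma=-\tfrac{1}{2C}|\lambda|^{\alpha}$, and the resolvent is uniformly controlled there.

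Next I would fix an upward-oriented contour $\Gamma$ that follows this left boundary for $|\lambda|\ge M$ and is closed off near $\lambda=0$ through the right half-plane $\{\sigma>0\}$, where the bound $M_0/\sigma$ applies. For $t>0$ one then has
$$\mathcal{A}^n e^{t\mathcal{A}}=\frac{1}{2\pi i}\int_{\Gamma}s^{\,n}\,e^{ts}\,(sI-\mathcal{A})^{-1}\,ds,$$
the integral converging absolutely because $|e^{ts}|=e^{t\sigma}$ decays like $e^{-\frac{t}{2C}|\lambda|^{\alpha}}$ on the parabolic part of $\Gamma$; this simultaneously shows that $e^{t\mathcal{A}}$ is infinitely differentiable and justifies differentiating under the integral. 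On $\Gamma$ one has $|s|^{n}\le C_1^{\,n}|\lambda|^{n}$ (since $|\sigma|\le\tfrac{1}{2C}|\lambda|^{\alpha}\le\tfrac{1}{2C}|\lambda|$), so with the resolvent bound the estimate reduces to controlling
$$\int_{M}^{\infty}r^{\,n-\alpha}\,e^{-\frac{t}{2C}r^{\alpha}}\,dr,$$
and the substitution $u=r^{\alpha}$ turns this into a Gamma integral of the form $\mathrm{const}\cdot t^{-(n+1)/\alpha+1}\,\Gamma\!\bigl(\tfrac{n+1}{\alpha}-1\bigr)$.

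Finally I would use Stirling's formula, in the form $\Gamma\!\bigl(\tfrac{n+1}{\alpha}-1\bigr)\le K_1\,K_2^{\,n}\,(n!)^{1/\alpha}$ for suitable constants $K_1,K_2$. On a compact set $\mathcal{K}\subset(0,\infty)$ with $t_0=\min\mathcal{K}$ the factor $t^{-(n+1)/\alpha+1}$ is bounded by $C_2\,C_3^{\,n}$, so the pieces combine into
$$\|\mathcal{A}^n e^{t\mathcal{A}}\|_{\mathcal{L}(\mathcal{H})}\le C\,K^{\,n}\,(n!)^{1/\alpha},\qquad t\in\mathcal{K},$$
with $K$ depending only on $\mathcal{K}$ and on $C,M,M_0$. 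Since $\delta>1/\alpha$, the super-exponential factor $(n!)^{\delta-1/\alpha}\to\infty$ absorbs any ratio $(K/\lambda)^{n}$, so for each prescribed $\lambda>0$ there is $C(\lambda,\mathcal{K})$ with $\|\mathcal{A}^n e^{t\mathcal{A}}\|_{\mathcal{L}(\mathcal{H})}\le C(\lambda,\mathcal{K})\,\lambda^{n}(n!)^{\delta}$, which is precisely the Gevrey-class-$\delta$ estimate. I expect the main obstacle to be the rigorous construction of $\Gamma$ together with the convergence and interchange justifications, and the careful Stirling bookkeeping needed to land exactly on the exponent $1/\alpha$; the final passage from $1/\alpha$ to an arbitrary $\delta>1/\alpha$ is then elementary.
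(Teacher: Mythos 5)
The paper does not actually prove this lemma --- it is quoted verbatim from Taylor's thesis \cite{taylor} --- and your outline reproduces precisely the classical argument of that source (and of Chen--Triggiani): a Neumann-series extension of the resolvent to a region $\Re s\ge -c|\Im s|^{\alpha}$ with the same $|\Im s|^{-\alpha}$ bound, deformation of the Dunford integral for $\mathcal{A}^n e^{t\mathcal{A}}$ onto that curve, reduction to a Gamma integral, and Stirling to land on $(n!)^{1/\alpha}$. The only step needing real care is the one you already flagged: for a merely bounded (non-analytic) semigroup the inverse-Laplace representation on a vertical line holds only in a conditional sense, so the contour deformation must be justified via a principal-value argument or by working with $(sI-\mathcal{A})^{-2}$ and integrating by parts, since the resolvent alone does not decay fast enough for naive absolute convergence on the connecting segments.
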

In their work \cite{cru}, G. Chen and Russell considered the following system
$$y_{tt}+Ay+By_t=0\text{ in }(0,\infty)$$
where $A$ is a self-adjoint operator on the Hilbert space $H$, strictly positive, with domain dense in $H$ and $B$ is a positive self-adjoint operator with dense domain in $H$. Set $E=D(A^{\frac{1}{2}})\times H$. They show that if for every $\rho>0$, there exists $\varepsilon(\rho)>0$ such that
$$(2\rho-\varepsilon(\rho))A^{\frac{1}{2}}\leq B\leq (2\rho+\varepsilon(\rho))A^{\frac{1}{2}},$$then the underlying semigroup is analytic on $E$. \\ Then, they conjecture the analyticity of the semigroup provided that
$$\exists 0<\rho_1<\rho_2<\infty:\rho_1 A^{\frac{1}{2}}\leq B\leq \rho_2 A^{\frac{1}{2}},$$or else
$$\rho_1^2A\leq B^2\leq \rho_2^2 A.$$
Later on, S.P. Chen and Triggiani responded to those conjectures by proving that if 
$$\exists \mu\in(0,1],~~\exists 0<\rho_1<\rho_2<\infty:\rho_1 A^{\mu}\leq B\leq \rho_2 A^{\mu},$$
then the semigroup is
\begin{enumerate}
\item analytic for $\frac{1}{2}\leq\mu\leq1$,  but not analytic for $0<\mu<\frac{1}{2}$, \cite{ctra}
\item of Gevrey class $\delta$ for all $\delta>\frac{1}{2\mu}$ for  $0<\mu<\frac{1}{2}$, \cite{ctrg}
\end{enumerate}
and this on a wide range of energy spaces, not only $E$.  See also the works by Huang \cite {H2}, and Huang and Liu \cite{HuL}.\\
In particular, \cite {ctrg} generalizes the work of Taylor \cite{taylor} where the author discusses Gevrey semigroups, and illustrates his work with several examples including the case  $B=2\rho A^{\mu}$ for some positive constant $\rho$.
\\
Our purpose in this note is to examine the following questions:  Assume a coercive dissipative mechanism: $\alpha\gamma>\beta^2\alpha_0$. Do we have results similar to Chen-Triggiani \cite{ctra, ctrg}, namely, is the semigroup $(S(t))_{t\geq0}$ analytic for $\frac{1}{2}\leq\mu  , \theta\leq1$? And if $s:=\min(\mu,\theta)$ lies in $(0,\frac{1}{2})$, is the semigroup of Gevrey class $\delta$ for all $\delta>\frac{1}{2s}$?\\ Before answering those questions, we would like to mention the work \cite{ly} where the authors discuss an abstract evolutionary system of the form $$\dot Z=\mathcal A Z$$ with $\mathcal A$ given by 
$$ \mathcal A=\begin{pmatrix}-A_0&B\\C&-A_1\end{pmatrix},$$ where $C=-B^*$. They establish sufficient conditions on the operators $A_0$, $A_1$, $B$ and $C$ for the operator $\mathcal A$ to generate an exponentially stable, analytic, differentiable or Gevrey class semigroup. Their results apply to many dynamical systems. However, the condition $C=-B^*$ excludes the abstract system considered in this note, where we have $B=C$, if we use the notations in \cite{ly}.

\medskip

\section{Analyticity: case $\frac{1}{2} \leq \mu , \theta \leq 1$}
\begin{thm}\label{th1}
If $\frac{1}{2}\leq \mu ,\theta \leq 1$ the associated semigroup $(S(t))_{t\geq 0}$ is analytic.
\end{thm}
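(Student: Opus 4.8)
The plan is to apply the resolvent characterization of analyticity in Lemma~\ref{lem1}. Since \eqref{sst} already guarantees $i\mathbb{R}\subset\rho({\cal A}_{\mu,\theta})$ and $(S(t))_{t\ge0}$ is a contraction semigroup, it suffices to produce a constant $C>0$, independent of $\lambda$, such that $|\lambda|\,\|U\|\le C\|F\|$ for every real $\lambda$ with $|\lambda|$ large, whenever $U=(u,v,w,z)\in D({\cal A}_{\mu,\theta})$ and $F=(f^1,f^2,f^3,f^4)\in{\cal H}$ solve the resolvent equation $(i\lambda I-{\cal A}_{\mu,\theta})U=F$. Componentwise this reads
\begin{equation*}
i\lambda u-v=f^1,\quad i\lambda v+A_1u+\alpha B_1v+\beta B_1z=f^2,\quad i\lambda w-z=f^3,\quad i\lambda z+A_2w+\beta B_1v+\gamma B_2z=f^4 .
\end{equation*}

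First I would extract the damping information. Taking the real part of $\langle F,U\rangle=\langle(i\lambda I-{\cal A}_{\mu,\theta})U,U\rangle=i\lambda\|U\|^2-\langle{\cal A}_{\mu,\theta}U,U\rangle$ and invoking the dissipativity estimate \eqref{dis} gives
\begin{equation*}
k_1|B_1^{1/2}v|^2+k_2|B_2^{1/2}z|^2\le\Re\langle F,U\rangle\le\|F\|\,\|U\| .
\end{equation*}
Combined with the comparison inequalities \eqref{b1} and \eqref{b2} this bounds the fractional velocities, $|A_1^{\mu/2}v|^2+|A_2^{\theta/2}z|^2\le C\|F\|\,\|U\|$, and, since $A_1,A_2$ are bounded below, also $|v|^2+|z|^2\le C\|F\|\,\|U\|$. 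Note that by itself this yields only the Gevrey-scale bound $\|U\|\le C\|F\|/|\lambda|^{1/2}$, so the analytic estimate requires a genuine half-power improvement.

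The decisive step is to recover the potential terms and to gain the missing power of $|\lambda|$. I would eliminate the velocities through $v=i\lambda u-f^1$ and $z=i\lambda w-f^3$ and substitute into the second and fourth equations, obtaining two reduced elliptic equations of the schematic form $(A_1-\lambda^2+i\lambda\alpha B_1)u=\text{(source)}-\beta B_1z$ and $(A_2-\lambda^2+i\lambda\gamma B_2)w=\text{(source)}-\beta B_1v$, to be read together with the multiplier identities obtained by pairing the second equation with $u$ and the fourth with $w$ (which relate $|A_1^{1/2}u|^2$ to $|v|^2$, and $|A_2^{1/2}w|^2$ to $|z|^2$). Since the damping is comparable in the form sense to $A_1^{\mu}$, respectively $A_2^{\theta}$, the effective symbol of the first reduced operator on the spectrum of $A_1$ is $a-\lambda^2+i\alpha\lambda a^{\mu}$, and the gain of a full power of $|\lambda|$ rests on the pointwise inequality $\lambda^4\le C\big((a-\lambda^2)^2+\lambda^2a^{2\mu}\big)$, which holds uniformly over $a$ precisely because $2\mu\ge1$ (and likewise $2\theta\ge1$ for the second subsystem). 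This is exactly where the hypothesis $\tfrac12\le\mu,\theta\le1$ is used, and where the argument would degrade to mere Gevrey regularity for smaller exponents.

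The main obstacle, absent from the single-equation analysis of Chen--Triggiani, is the coupling: the off-diagonal terms $\beta B_1z$ and $\beta B_1v$ link the two subsystems, which are governed by the distinct, generally non-commuting operators $A_1$ and $A_2$. The key tool to tame them is the comparison \eqref{b12}, which lets me transfer $B_1$ to the stronger damping $B_2\approx A_2^{\theta}$: in the cross pairings that arise (for instance $\beta\langle B_1z,u\rangle$ and $\beta\langle B_1v,w\rangle$) one estimates $|B_1^{1/2}z|\le\sqrt{\alpha_0}\,|B_2^{1/2}z|\le C|A_2^{\theta/2}z|$ by the dissipation bound of the second step, while the factors of the form $|B_1^{1/2}u|\le C|A_1^{\mu/2}u|$ carry, through $u=(v+f^1)/(i\lambda)$, an extra power of $1/|\lambda|$; here I also use the embedding $D(A_2^{\theta/2})\subseteq D(A_1^{\mu/2})$ recorded after \eqref{b2}. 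The delicate part is precisely the power-counting: one must arrange all source, damping and coupling contributions so that the cross terms remain strictly subordinate and the full power of $|\lambda|$ survives, which is the heart of the proof. Using Young's inequality and the positivity of $k_1,k_2$ furnished by \eqref{cod-dis}, I would then close an inequality of the form $\|U\|^2\le C|\lambda|^{-2}\|F\|^2+\eta\|U\|^2$ with $\eta<1$ absorbable, obtaining $|\lambda|\,\|U\|\le C\|F\|$ and, by Lemma~\ref{lem1}, the analyticity of $(S(t))_{t\ge0}$.
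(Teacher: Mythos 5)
Your overall frame is the right one and, up to presentation, matches the paper's: the paper argues by contradiction with a normalized sequence, but the substance is the same---the dissipativity estimate \eqref{dis} yields $|B_1^{1/2}v|^2+|B_2^{1/2}z|^2\le C\|F\|\,\|U\|$, which via \eqref{b1}--\eqref{b2} controls $|v|,|z|$ only at the Gevrey scale, and everything hinges on upgrading this by one more half power of $|\lambda|$ through the potential terms. The problem is that your decisive step is not a proof. You justify the extra gain by the pointwise inequality $\lambda^4\le C\big((a-\lambda^2)^2+\lambda^2a^{2\mu}\big)$ for the ``effective symbol'' $a-\lambda^2+i\alpha\lambda a^{\mu}$ of the reduced operator. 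That computation presupposes that $B_1$ is a function of $A_1$, so that the two can be simultaneously diagonalized; the standing hypothesis \eqref{b1} gives only a two-sided comparison of quadratic forms, $A_1$ and $B_1$ need not commute, and the reduced operator has no symbol. Making the gain rigorous in this non-commuting setting is precisely the content of Chen--Triggiani's theorem already for a single equation, and you do not reproduce it. Moreover, you explicitly defer the coupling estimates (``the delicate part is precisely the power-counting \dots which is the heart of the proof''); a proof cannot postpone its own heart. Note also that your reduced equation carries $-\beta B_1z$ as a source while dissipation controls only $|B_1^{1/2}z|$, not $|B_1z|$, so even an operator-norm lower bound coming from a symbol could not be applied to that right-hand side: one is forced to work with quadratic forms and a well-chosen multiplier.

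The missing idea, which is what the paper supplies, is the multiplier $A_1^{1-\mu}u$ (resp.\ $A_2^{1-\theta}w$). It is bounded in $H$ exactly because $\mu\ge\frac12$ makes $A_1^{\frac12-\mu}$ bounded---this is where the hypothesis enters---and pairing the second resolvent equation with it gives $|\lambda|^{-1}|A_1^{1-\mu/2}u|^2=O(\|F\|\,\|U\|)$ once the damping and coupling cross terms are absorbed: write $\langle B_1v,A_1^{1-\mu}u\rangle=\langle B_1^{1/2}v,B_1^{1/2}A_1^{1-\mu}u\rangle$, use \eqref{b1} to convert $|B_1^{1/2}A_1^{1-\mu}u|$ into $C|A_1^{1-\mu/2}u|$, and apply Young's inequality against the very quantity being estimated, with the dissipation bounds on $|B_1^{1/2}v|$ and $|B_1^{1/2}z|$ (the latter via \eqref{b12}) closing the loop; the second component is treated identically with \eqref{b2}. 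Cauchy--Schwarz in the form $|\lambda|^{-1}|\langle A_1u,v\rangle|\le\big(|\lambda|^{-1/2}|A_1^{1-\mu/2}u|\big)\big(|\lambda|^{-1/2}|A_1^{\mu/2}v|\big)$ then makes the potential terms negligible, and the equipartition identities force $|v|,|z|\to 0$, giving the contradiction (equivalently, the direct bound $|\lambda|\,\|U\|\le C\|F\|$ and analyticity by Lemma \ref{lem1}). If you want to salvage your outline, replace the symbol heuristic by this multiplier computation.
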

\begin{proof}
Since $i\mathbb{R}\subset \rho({\cal A}_{\mu,\theta}),$ then, using Lemma \ref{lem1}, it suffices to show (\ref{ana}).
Suppose the result is false, then there exist a sequence $\lambda_n$ of real numbers with $|\lambda_n|$ going to $\infty$ as $n\rightarrow \infty$, and a sequence $Z_n=(u_n,v_n,w_n,z_n)$ in $D(\mathcal{A}_{\mu,\theta})$, with $||Z_n||=1$ and
\begin{equation}\label{lim2}
\lim_{|\lambda_n|\to\infty} |\lambda_n|^{-1}||(i\lambda_n I - {\cal A}_{\mu,\theta})Z_n||=0.
\end{equation}
First, taking  the real part of the inner product of $|\lambda_n|^{-1}\left(i\lambda_n I - {\cal A}_{\mu,\theta}\right) Z_n$ with $Z_n$, then  using (\ref{lim2}), (\ref{dis}) and the condition (\ref{cod-dis}), we obtain
\begin{equation}\label{diss}
|\lambda_n|^{-{\frac{1}{2}}}|B_1^{\frac{1}{2}}v_n|=o(1),\;\;\;\;\;\;|\lambda_n|^{-{1\over2}}|B_2^\frac{1}{2}z_n|=o(1)\;\;\;\; \text{and}\;\;\;\;|\lambda_n|^{-{1\over2}}|B_1^\frac{1}{2}z_n|=o(1).
\end{equation}
Now equation \eqref{lim2} can be rewritten explicitly as follows
\begin{eqnarray}
|\lambda_n|^{-1}\left(i\lambda_nA_1^{1\over2}u_n-A_1^{1\over2}v_n\right)=o(1), \label{eq1}\\
|\lambda_n|^{-1}\left(i\lambda_nv_n+A_1u_n+\alpha B_1 v_n + \beta B_1 z_n\right)=o(1),\label{eq2}\\
|\lambda_n|^{-1}\left(i\lambda_nA_2^{1\over2}w_n-A_2^{1\over2}z_n\right)=o(1),\label{eq3}\\
|\lambda_n|^{-1}\left(i\lambda_nz_n+ \beta B_1u_n+ A_2w_n+\gamma B_2 z_n\right)=o(1).\label{eq4}
\end{eqnarray}
Taking the inner product of \eqref{eq1} and \eqref{eq2} with $A_1^{1\over2}u_n$ and $v_n$ respectively, and \eqref{eq3} and \eqref{eq4} with $A_2^{1\over2}w_n$ and $z_n$ respectively, one obtains
\begin{eqnarray}
|\lambda_n|^{-1}\left( i\lambda_n|A_1^{1\over2}u_n|^2-\left\langle v_n,A_1u_n\right\rangle\right)  =o(1), \label{eq11}\\
|\lambda_n|^{-1}\left( i\lambda_n|v_n|^2+\left\langle A_1u_n,v_n\right\rangle+\alpha|B_1^{\frac{1}{2}}v_n|^2
+\beta\left\langle B_1z_n,v_n\right\rangle\right)  =o(1),\label{eq21}\\
|\lambda_n|^{-1}\left( i\lambda_n|A_2^{1\over2}w_n|^2-\left\langle z_n,A_2w_n\right\rangle\right)  =o(1), \label{eq31}\\
|\lambda_n|^{-1}\left( i\lambda_n|z_n|^2+\left\langle A_2w_n,z_n\right\rangle+\beta\left\langle B_1v_n,z_n\right\rangle+\gamma |B_2^{\frac{1}{2}}z_n|^2\right)  =o(1). \label{eq41}
\end{eqnarray}
Taking into account estimations \eqref{diss}, the equations \eqref{eq21} and \eqref{eq41} can be rewritten as follows 
\begin{eqnarray}
|\lambda_n|^{-1}\left( i\lambda_n|v_n|^2+\left\langle A_1u_n,v_n\right\rangle\right)
  =o(1),\label{eq211}\\
|\lambda_n|^{-1}\left( i\lambda_n|z_n|^2+\left\langle A_2w_n,z_n\right\rangle \right) =o(1)\label{eq411}.
\end{eqnarray}
Combining conjugates of \eqref{eq211} and \eqref{eq411} with \eqref{eq11} and \eqref{eq31}, one get after dividing by $\lambda_n|\lambda_n|^{-1}$
\begin{equation*}
 |A_1^{1\over2}u_n|^2+|A_2^{1\over2}w_n|^2-|v_n|^2-|z_n|^2  =o(1),
\end{equation*}
which, thanks to $||Z_n||=1$, leads to
\begin{equation}
 |v_n|^2+|z_n|^2-\frac{1}{2} =o(1).
\label{eqq}
\end{equation}
We will prove that $|v_n|=o(1)$ and $|z_n|=o(1)$ which contradicts (\ref{eqq}). For this, dividing \eqref{eq211} and \eqref{eq411} by $|\lambda_n|^{-1}\lambda_n$ to get
\begin{equation}
 i|v_n|^2+\frac{1}{\lambda_n} \left\langle A_1u_n,v_n\right\rangle =o(1),\label{eQQ1}
\end{equation}  
and
\begin{equation}
 i|z_n|^2+\frac{1}{\lambda_n} \left\langle A_2w_n,z_n\right\rangle =o(1)\label{eQQ2}.
\end{equation} 
Thus, it suffices to prove that $\frac{1}{\lambda_n} \left\langle A_1u_n,v_n\right\rangle =o(1)$ and $\frac{1}{\lambda_n}\left\langle A_2w_n,z_n\right\rangle =o(1)$.

\medskip

To start, we have the following estimate by applying the Cauchy-Schwarz inequality,
\begin{equation}
\left|\frac{1}{\lambda_n} \left\langle A_1u_n,v_n\right\rangle \right| \leq \frac{|A_1^{1-\frac{\mu}{2}}u_n|}{|\lambda_n|^{1\over2}}\frac{|A_1^{\mu \over2}v_n|}{|\lambda_n|^{1\over2}}.\label{csch1}
\end{equation}

\medskip

Note that  $A_1^{1-\mu}u_n$ is bounded, since $A_1^{1-\mu}u_n=A_1^{\frac{1}{2}-\mu}\left( A_1^{\frac{1}{2}}u_n\right)$ and $A_1^{\frac{1}{2}-\mu}$ is a bounded operator and $A_1^{\frac{1}{2}}u_n$ is also bonded. Then  taking the inner product of (\ref{eq2}) with $A_1^{1-\mu}u_n$, and dividing the obtained result by $|\lambda_n|^{-1}\lambda_n$, we get
\begin{equation}
-i\left\langle A_1^{1-\mu}u_n,v_n \right\rangle+\frac{1}{\lambda_n}|A_1^{1-\frac{\mu}{2}}u_n|^2 +\alpha\left \langle A_1^{1-\mu}u_n,\frac{1}{\lambda_n}B_1v_n \right\rangle+\beta\left \langle A_1^{1-\mu}u_n,\frac{1}{\lambda_n}B_1z_n \right\rangle   =o(1)\label{eqq1}.
\end{equation}

Since $A^{1-\mu}u_n$ and $v_n$ are bounded, the first term in the left hand side of (\ref{eqq1}) is bounded.

\medskip
 
On the other hand, from (\ref{eq1}) and the boundedness of $|A_1^{\frac{1}{2}}u_n|$, we deduce that $\frac{1}{\lambda_n}A_1^{\frac{1}{2}}v_n$ is bounded. 

The estimate of $\left \langle A_1^{1-\mu}u_n,\frac{1}{\lambda_n}B_1v_n \right\rangle $ in (\ref{eqq1}) will be proved as follows:
\begin{eqnarray}
\left| \left\langle A_1^{1-\mu}u_n,\frac{1}{\lambda_n} B_1 v_n\right\rangle \right|= \left| \left\langle \frac{1}{\lambda_n^{\frac{1}{2}}}B_1^{\frac{1}{2}}\left( A_1^{1-\mu}u_n\right) ,\frac{1}{\lambda_n^{\frac{1}{2}}}B_1^{\frac{1}{2}}v_n\right\rangle \right| \leq \frac{\left|B_1^{\frac{1}{2}}\left( A_1^{1-\mu}u_n\right)\right|}{|\lambda_n|^{\frac{1}{2}}}\frac{|B_1^{\frac{1}{2}}v_n|}{|\lambda_n|^{\frac{1}{2}}}\notag\\
\leq \frac{1}{4}\left( \frac{|A_1^{1-\frac{\mu}{2}}u_n|}{|\lambda_n|^{1\over2}}\right)^2 +C\left( \frac{|B_1^{1 \over2}v_n|}{|\lambda_n|^{1\over2}}\right)^2 ,\label{est0}
\end{eqnarray}
where we have used (\ref{b1}), and hereafter, $C$ denotes a generic positive constant that is independent of $n$.\\ Similarly, we give an estimate of $\left \langle A_1^{1-\mu}u_n,\frac{1}{\lambda_n}B_1z_n \right\rangle $  as follows:
\begin{eqnarray*}
\left| \left\langle A_1^{1-\mu}u_n,\frac{1}{\lambda_n}B_1 z_n\right\rangle \right|= \left| \left\langle \frac{1}{\lambda_n^{\frac{1}{2}}}B_1^{\frac{1}{2}}\left( A_1^{1-\mu}u_n\right) ,\frac{1}{\lambda_n^{\frac{1}{2}}}B_1^{\frac{1}{2}}z_n\right\rangle \right| \leq \frac{|B_1^{\frac{1}{2}}\left( A_1^{1-\mu}u_n\right)|}{|\lambda_n|^{\frac{1}{2}}}\frac{|B_1^{\frac{1}{2}}z_n|}{|\lambda_n|^{\frac{1}{2}}}\\
\leq \frac{1}{4}\left( \frac{|A_1^{1-\frac{\mu}{2}}u_n|}{|\lambda_n|^{1\over2}}\right)^2 +C\left( \frac{|B_1^{1 \over2}z_n|}{|\lambda_n|^{1\over2}}\right) ^2.
\end{eqnarray*}
Furthermore, recall that $\frac{|B_1^{1 \over2}v_n|}{|\lambda_n|^{1\over2}}=o(1)$ and $\frac{|B_1^{1 \over2}z_n|}{|\lambda_n|^{1\over2}}=o(1)$ (by (\ref{diss})). All that leads to the boundedness of $\frac{|A_1^{1-\frac{\mu}{2}}u_n|}{|\lambda_n|^{\frac{1}{2}}}$.
\\
Returning to (\ref{csch1}), we get, (keeping in mind again $\frac{|A_1^{\mu \over2}v_n|}{\lambda_n^{1\over2}}=o(1)$, from (\ref{diss}) and (\ref{b1})), 
\begin{equation}
\frac{1}{\lambda_n} \left\langle A_1u_n,v_n\right\rangle =o(1).
\label{t1}
\end{equation}
Similarly, 
we have
\begin{equation}
\left|\frac{1}{\lambda_n} \left\langle A_2w_n,z_n\right\rangle \right| \leq \frac{|A_2^{1-\frac{\theta}{2}}w_n|}{|\lambda_n|^{1\over2}}\frac{|A_2^{\theta \over2}z_n|}{|\lambda_n|^{1\over2}}.
\label{csch2}
\end{equation}
Taking the inner product of (\ref{eq4}) with $A_2^{1-\theta}w_n$, which is also bounded, since $\theta \geq \frac{1}{2}$, we get after dividing by  $|\lambda_n|^{-1}\lambda_n$

\begin{equation}
-i\left\langle A_2^{1-\theta}w_n,z_n \right\rangle+\frac{1}{\lambda_n} |A_2^{1-\frac{\theta}{2}}w_n|^2 +\alpha\left \langle A_2^{1-\theta}w_n,\frac{1}{\lambda_n}B_1 v_n \right\rangle+\gamma\left \langle A_2^{1-\theta}w_n,\frac{1}{\lambda_n}B_2 z_n \right\rangle  =o(1).\label{eq'''}
\end{equation}
As in (\ref{eqq1}), the first term  in (\ref{eq'''}) is bounded. Moreover, as in (\ref{est0}), using (\ref{b12}) and (\ref{b2}), we have the following estimate of $\left \langle A_2^{1-\theta}w_n,\frac{1}{\lambda_n}B_1v_n \right\rangle $ :

\begin{eqnarray*}
\left| \left\langle A_2^{1-\theta}w_n,\frac{1}{\lambda_n} B_1 v_n\right\rangle \right|= \left| \left\langle \frac{1}{\lambda_n^{\frac{1}{2}}}B_1^{\frac{1}{2}}\left( A_2^{1-\theta}w_n\right) ,\frac{1}{\lambda_n^{\frac{1}{2}}}B_1^{\frac{1}{2}}v_n\right\rangle \right| \leq \frac{\left|B_1^{\frac{1}{2}}\left( A_2^{1-\theta}u_n\right)\right|}{|\lambda_n|^{\frac{1}{2}}}\frac{\left|B_1^{\frac{1}{2}}v_n\right|}{|\lambda_n|^{\frac{1}{2}}}\\
\leq C\frac{| A_2^{1-\frac{\theta}{2}}w_n|}{|\lambda_n|^{\frac{1}{2}}}\frac{|B_1^{\frac{1}{2}}v_n|}{|\lambda_n|^{\frac{1}{2}}}\\
\leq \frac{1}{4}\left( \frac{|A_2^{1-\frac{\theta}{2}}w_n|}{|\lambda_n|^{1\over2}}\right)^2 +C\left( \frac{|B_1^{1 \over2}v_n|}{|\lambda_n|^{1\over2}}\right)^2.
\end{eqnarray*}
Similarly, we have 
\begin{align*}
\left| \left\langle A_2^{1-\theta}w_n,\frac{1}{\lambda_n} B_2 z_n\right\rangle \right|
\leq \frac{1}{4}\left( \frac{|A_2^{1-\frac{\theta}{2}}w_n|}{|\lambda_n|^{1\over2}}\right)^2 +C\left( \frac{|B_2^{1 \over2}z_n|}{|\lambda_n|^{1\over2}}\right)^2.
\end{align*}
Then, using that $\frac{|B_1^{1 \over2}v_n|}{|\lambda_n|^{\frac{1}{2}}}=o(1)$ and $\frac{|B_2^{1 \over2}z_n|}{|\lambda_n|^{\frac{1}{2}}}=o(1)$, (\ref{eq'''})  implies that $\frac{|A_2^{1-\frac{\theta}{2}}w_n|}{|\lambda_n|^{\frac{1}{2}}}=O(1)$. Keeping in mind $\frac{|A_2^{\theta \over2}z_n|}{\lambda_n^{1\over2}}=o(1)$ (from (\ref{diss}) and (\ref{b2})), it follows 
\begin{equation*}
\frac{1}{\lambda_n} \left\langle A_2w_n,z_n\right\rangle =o(1).\label{t2}
\end{equation*}
Reporting (\ref{t1}) and (\ref{csch2}) in (\ref{eQQ1}) and (\ref{eQQ2}) respectively to derive
$$|v_n|^2+|z_n|^2=o(1)$$
which contradicts  (\ref{eqq}).
\end{proof}

\section{Gevrey semigroup: case $\min(\mu,\theta)\in(0,\frac{1}{2})$}
\begin{thm} \label{th2}
For every $\mu,\theta\in (0,1]$ such that $s:=\min(\mu,\theta)\in(0,\frac{1}{2})$, the associated semigroup is of Gevrey class $\delta$ for every $\delta\geq \frac{1}{2s}$. More precisely, there exists a positive constant $C$ such that we have the resolvent estimate:
\begin{equation}
|\lambda|^{2s}\|(i\lambda I-\mathcal{A}_{\mu,\theta})^{-1}\|_{\mathcal{L}(\mathcal{H})}\leq  C,\;\;\;\forall\,\lambda \in \mathbb{R}.\label{sec6}
\end{equation}
\end{thm}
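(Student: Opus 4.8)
The plan is to apply Lemma \ref{lem2} with exponent $\alpha=2s\in(0,1)$: since $i\mathbb{R}\subset\rho(\mathcal{A}_{\mu,\theta})$, it suffices to prove the resolvent bound \eqref{sec6}, and I would argue by contradiction exactly as in Theorem \ref{th1}, with the weight $|\lambda_n|^{-1}$ replaced throughout by $|\lambda_n|^{-2s}$. So suppose \eqref{sec6} fails; then there exist $\lambda_n\in\mathbb{R}$ with $|\lambda_n|\to\infty$ and $Z_n=(u_n,v_n,w_n,z_n)\in D(\mathcal{A}_{\mu,\theta})$ with $\|Z_n\|=1$ and $|\lambda_n|^{-2s}\|(i\lambda_n I-\mathcal{A}_{\mu,\theta})Z_n\|=o(1)$, whose components are the exact analogues of the four equations of Theorem \ref{th1}, now carrying the factor $|\lambda_n|^{-2s}$. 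Taking the real part of the inner product with $Z_n$ and invoking \eqref{dis} and \eqref{cod-dis}, I obtain the refined dissipation $|\lambda_n|^{-s}|B_1^{\frac12}v_n|=o(1)$, $|\lambda_n|^{-s}|B_2^{\frac12}z_n|=o(1)$, and, via \eqref{b12}, $|\lambda_n|^{-s}|B_1^{\frac12}z_n|=o(1)$.

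Next, pairing the four component equations with $A_1^{\frac12}u_n$, $v_n$, $A_2^{\frac12}w_n$, $z_n$ as in Theorem \ref{th1}, the damping cross terms become $o(1)$ after weighting because of the refined dissipation, and combining the identities with their conjugates and with $\|Z_n\|=1$ produces the same equipartition $|v_n|^2+|z_n|^2=\tfrac12+o(1)$. It remains to contradict this by proving $|v_n|=o(1)$ and $|z_n|=o(1)$; in view of the equipartition and of the first and third equations, this is equivalent to $\lambda_n^{-1}\langle A_1u_n,v_n\rangle=o(1)$ and $\lambda_n^{-1}\langle A_2w_n,z_n\rangle=o(1)$. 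By Cauchy--Schwarz,
\[
\Big|\tfrac1{\lambda_n}\langle A_1u_n,v_n\rangle\Big|\le |\lambda_n|^{-(1-s)}\,|A_1^{1-\frac{\mu}{2}}u_n|\cdot |\lambda_n|^{-s}\,|A_1^{\frac{\mu}{2}}v_n|,
\]
where the second factor is $o(1)$ by the refined dissipation and \eqref{b1}. Everything thus reduces to the a priori bound $|A_1^{1-\frac{\mu}{2}}u_n|=O(|\lambda_n|^{1-s})$ and, symmetrically, $|A_2^{1-\frac{\theta}{2}}w_n|=O(|\lambda_n|^{1-s})$.

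For $\mu\ge\frac12$ this bound is obtained precisely as in Theorem \ref{th1}, the multiplier $A_1^{1-\mu}u_n$ being bounded there. For $\mu<\frac12$ that multiplier is no longer admissible (one does not control $u_n$ in $D(A_1^{1-\mu})$, and the term $\langle v_n,A_1^{1-\mu}u_n\rangle$ ceases to be bounded), so I would instead eliminate $v_n$ through the first equation, $v_n=i\lambda_nu_n-r_n^{(1)}$ with $r_n^{(1)}$ small in $V_1$, and recast the second equation as the resolvent-type identity
\[
(A_1-\lambda_n^2+i\alpha\lambda_n B_1)u_n=-\beta B_1 z_n+\mathcal R_n,
\]
where $\mathcal R_n$ is assembled from the residual $(i\lambda_n I-\mathcal A_{\mu,\theta})Z_n$, which is small in the energy norm. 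The decisive feature is that the damping is nondegenerate: on the resonant spectral region $A_1\approx\lambda_n^2$, where $A_1-\lambda_n^2$ is not boundedly invertible, the term $\alpha\lambda_n B_1\sim\alpha\lambda_nA_1^{\mu}$ is of size $\sim|\lambda_n|^{1+2\mu}$ and controls the inversion; the $V_1$-smallness of $r_n^{(1)}$ suppresses the high-frequency content of the source, and the coupling $-\beta B_1z_n$ is absorbed using $|\lambda_n|^{-s}|B_1^{\frac12}z_n|=o(1)$. Carrying this out with \eqref{b1}, \eqref{b2} and the energy smallness of the residual yields $|A_1^{1-\frac{\mu}{2}}u_n|=O(|\lambda_n|^{1-\mu})=O(|\lambda_n|^{1-s})$ (using $s\le\mu$), and the block $(w_n,z_n)$ is treated identically with $A_2,\theta,B_2$ and $s\le\theta$.

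The main obstacle, and the only genuinely new point compared with Theorem \ref{th1}, is exactly this resonant estimate when $\min(\mu,\theta)<\frac12$: the operator $A_1-\lambda_n^2$ is singular on a band around $\lambda_n^2$, only the fractional damping rescues invertibility, and one must make this quantitative through the mere two-sided comparability \eqref{b1} (not an identity), so that $A_1$ and $B_1$ need not be simultaneously diagonalizable—forcing a quadratic-form argument on $\langle A_1^{\frac12}\cdot,A_1^{\frac12}\cdot\rangle-\lambda_n^2\langle\cdot,\cdot\rangle+i\alpha\lambda_n\langle B_1^{\frac12}\cdot,B_1^{\frac12}\cdot\rangle$ rather than a spectral multiplier. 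The off-diagonal coupling $\beta B_1$ linking the two blocks is kept harmless by the condition $\alpha\gamma>\beta^2\alpha_0$, which already supplied the dissipation. Once both a priori bounds hold, the Cauchy--Schwarz estimates give $\lambda_n^{-1}\langle A_1u_n,v_n\rangle=o(1)$ and $\lambda_n^{-1}\langle A_2w_n,z_n\rangle=o(1)$, hence $|v_n|^2+|z_n|^2=o(1)$, contradicting the equipartition and establishing \eqref{sec6}.
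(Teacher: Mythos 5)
Your proposal tracks the paper's proof faithfully through the contradiction setup, the weighted dissipation estimates, the equipartition identity $|v_n|^2+|z_n|^2=\frac12+o(1)$, and the Cauchy--Schwarz reduction to the a priori bound $|A_1^{1-\frac{\mu}{2}}u_n|=O(|\lambda_n|^{1-s})$. The divergence, and the gap, is in how you obtain that bound when $\mu<\frac{1}{2}$. You correctly identify this as the crux, but you then replace the proof by a programme: recast the second equation as $(A_1-\lambda_n^2+i\alpha\lambda_n B_1)u_n=-\beta B_1z_n+\mathcal{R}_n$ and invert near the resonant region using the nondegenerate damping. That step is never made quantitative: you assert the output $|A_1^{1-\frac{\mu}{2}}u_n|=O(|\lambda_n|^{1-\mu})$ without deriving it, and, as you yourself note, $A_1$ and $B_1$ are only comparable in the sense of quadratic forms (\ref{b1}), so no joint spectral calculus is available to localize to the band $A_1\approx\lambda_n^2$; making the ``damping rescues invertibility'' heuristic rigorous in that generality, with the off-diagonal coupling $\beta B_1z_n$ present, is essentially as hard as the theorem itself. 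As written, the case $\mu<\frac12$ (and symmetrically $\theta<\frac12$) is a plausible plan, not a proof.

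The paper closes exactly this point by a much more elementary device: it keeps the multiplier $A_1^{1-\mu}u_n$ even for $\mu<\frac{1}{2}$, and controls the a priori unbounded term $\bigl\langle\lambda_n^{-(1-2s)}A_1^{1-\mu}u_n,\;|\lambda_n|^{2s}\lambda_n^{-1}h_n-iv_n\bigr\rangle$ via the moment (interpolation) inequality
\[
|A_1^{1-\mu}u_n|\le C\,|A_1^{1-\frac{\mu}{2}}u_n|^{\frac{1-2\mu}{1-\mu}}\,|A_1^{\frac12}u_n|^{\frac{\mu}{1-\mu}},
\]
together with the elementary inequality $\frac{(1-\mu)(1-2s)}{1-2\mu}\ge 1-s$ (valid precisely because $s\le\mu$) and Young's inequality; since the exponent $\frac{1-2\mu}{1-\mu}$ is strictly less than $2$, the resulting term is absorbed into $\frac13\bigl(|\lambda_n|^{-(1-s)}|A_1^{1-\frac{\mu}{2}}u_n|\bigr)^2+C$, which closes the estimate with no resolvent inversion and no spectral localization. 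To complete your proposal you must either supply the quantitative resonant estimate you sketch, or substitute this interpolation--Young step, which fits directly into the multiplier identity you already wrote down and leaves the rest of your argument intact.
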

\begin{proof}
Suppose the result is false, then there exist a sequence $\lambda_n$ of real numbers where $|\lambda_n|$ goes to $\infty$ as $n\rightarrow \infty$, and a sequence $Z_n=(u_n,v_n,w_n,z_n)$ in $D(\mathcal{A}_{\mu, \theta})$, with $||Z_n||=1$ such that
\begin{equation}\label{lim3}
\lim_{|\lambda_n|\to\infty} |\lambda_n|^{-2s}\left\vert(i\lambda I - {\cal A}_{\mu,\theta})Z_n\right\vert=0.
\end{equation}
Taking  the real part of the inner product of $|\lambda_n|^{-2s}\left(i\lambda_n I - {\cal A}_{\mu,\theta}\right) Z_n$ with $Z_n$, we derive from the dissipativity estimate:
\begin{equation}\label{dissG}
|\lambda_n|^{-s}|B_1^{\frac{1}{2}}v_n|=o(1),\;\;\;\;\;\;|\lambda_n|^{-s}|B_2^\frac{1}{2}z_n|=o(1)\;\;\;\; \text{and}\;\;\;\;|\lambda_n|^{-s}|B_1^\frac{1}{2}z_n|=o(1).
\end{equation}
Equation \eqref{lim3} may be rewritten as:
\begin{eqnarray}
|\lambda_n|^{-2s}\left(i\lambda_nA_1^{1\over2}u_n-A_1^{1\over2}v_n\right)=o(1), \label{eq1G}\\
|\lambda_n|^{-2s}\left(i\lambda_nv_n+A_1u_n+\alpha B_1 v_n + \beta B_1 z_n\right):=h_n=o(1),\label{eq2G}\\
|\lambda_n|^{-2s}\left(i\lambda_nA_2^{1\over2}w_n-A_2^{1\over2}z_n\right)=o(1),\label{eq3G}\\
|\lambda_n|^{-2s}\left(i\lambda_nz_n+ \beta B_1v_n+ A_2w_n+\gamma B_2 z_n\right):=k_n=o(1).\label{eq4G}
\end{eqnarray}
As in the proof of the last theorem, one can deduce:

\begin{eqnarray}
i|v_n|^2+\frac{1}{\lambda_n}\left\langle A_1u_n,v_n\right\rangle
  =o(1),\label{eq211G}\\
 i|z_n|^2+\frac{1}{\lambda_n}\left\langle A_2w_n,z_n\right\rangle  =o(1)\label{eq411G},
\end{eqnarray}
and
\begin{eqnarray}
 |v_n|^2+|z_n|^2-\frac{1}{2} =o(1).\label{eqqG}
\end{eqnarray}
Again as in the first case, we will prove that $|v_n|=o(1)$ and $|z_n|=o(1)$ which would contradict (\ref{eqqG}).  To this end, we shall prove that $\frac{1}{\lambda_n} \left\langle A_1u_n,v_n\right\rangle =o(1)$ and $\frac{1}{\lambda_n} \left\langle A_2w_n,z_n\right\rangle =o(1)$.\\
The application of the Cauchy-Schwarz inequality yields
\begin{equation}
|\frac{1}{\lambda_n} \left\langle A_1u_n,v_n\right\rangle|\leq \frac{|A_1^{1-\frac{\mu}{2}}u_n|}{|\lambda_n|^{1-s}}\frac{|A_1^{\mu \over2}v_n|}{|\lambda_n|^{s}}.\label{csch3}
\end{equation}
The major  difficulty is to prove that $\frac{|A_1^{1-\frac{\mu}{2}}u_n|}{|\lambda_n|^{1-s}}$ is bounded.
Taking the inner product of (\ref{eq2G}) with $|\lambda_n|^{2s} \lambda_n^{-1}\frac{1}{\lambda_n^{1-2s}} A_1^{1-\mu}u_n$ to get
\begin{eqnarray*}
\frac{1}{\lambda_n^{1-2s}}\left\langle A_1^{1-\mu}u_n,iv_n \right\rangle+\frac{1}{\lambda_n^{2-2s}} |A_1^{1-\frac{\mu}{2}}u_n|^2 \\
+\left \langle \frac{1}{\lambda_n^{1-s}} A_1^{1-\mu}u_n,\frac{1}{\lambda_n^{1-s}}\left( \alpha B_1 v_n+ \beta B_1 z_n\right)  \right\rangle  =\frac{1}{\lambda_n^{1-2s}}\left\langle A_1^{1-\mu}u_n,|\lambda_n|^{2s} \lambda_n^{-1}h_n \right\rangle.
\end{eqnarray*}
Consequently, it follows
\begin{eqnarray}
\frac{1}{\lambda_n^{2-2s}}|A_1^{1-\frac{\mu}{2}}u_n|^2=-\left \langle \frac{1}{\lambda_n^{1-s}} A_1^{1-\mu}u_n,\frac{1}{\lambda_n^{1-s}}\left( \alpha B_1 v_n+ \beta B_1 z_n\right)  \right\rangle  \notag \\
+\left\langle \frac{1}{\lambda_n^{1-2s}}A_1^{1-\mu
}u_n,(|\lambda_n|^{2s} \lambda_n^{-1}h_n-iv_n) \right\rangle.\label{sec2'}
\end{eqnarray}
First, we have the following estimate

\begin{eqnarray*}
\left|\left \langle \frac{1}{\lambda_n^{1-s}} A_1^{1-\mu}u_n,\frac{1}{\lambda_n^{1-s}}\left( \alpha B_1 v_n+ \beta B_1 z_n\right)  \right\rangle  \right| \leq \frac{\left|B_1^{\frac{1}{2}}\left( A_1^{1-\mu}u_n\right) \right|}{|\lambda_n|^{1-s}}\frac{\alpha |B_1^{\frac{1}{2}} v_n|+|\beta| |B_1^{\frac{1}{2}} z_n|}{|\lambda_n|^{1-s}}\\
\leq  C\frac{|A_1^{1-\frac{\mu}{2}}u_n|}{|\lambda_n|^{1-s}}\frac{\left( |B_1^{\frac{1}{2}} v_n|+|B_1^{\frac{1}{2}} z_n|\right)}{|\lambda_n|^s},
\end{eqnarray*}
where we have used (\ref{b1}), and the elementary inequality:  $1-s > s$, since $s\in (0,\frac{1}{2})$.

Using Young inequality and the estimates
\begin{equation*}
|\lambda_n|^{-s}|B_1^{\frac{1}{2}}v_n|=o(1)\;\;\;\; \text{and}\;\;\;\;||\lambda_n|^{-s}|B_1^\frac{1}{2}z_n|=o(1)
\end{equation*}
 we deduce 
\begin{equation}
\left|\left \langle \frac{1}{\lambda_n^{1-s}} A_1^{1-\mu}u_n,\frac{1}{\lambda_n^{1-s}}\left( \alpha B_1 v_n+ \beta B_1 z_n\right)  \right\rangle  \right|\leq \frac{1}{3}\left( \frac{|A_1^{1-\frac{\mu}{2}}u_n|}{|\lambda_n|^{1-s}}\right)^2+o(1).\label{4es1}
\end{equation}
We are going to estimate the last term in (\ref{sec2'}). First, note that if $\mu \geq \frac{1}{2}$, then  $\frac{1}{\lambda_n^{1-2s}}A_1^{1-\mu}u_n$ is bounded (because $A_1^{1/2}u_n$ is bounded and $s<\frac{1}{2}$), so that $\left\langle \frac{1}{\lambda_n^{1-2s}}A_1^{1-\mu}u_n,(|\lambda_n|^{2s} \lambda_n^{-1}h_n-iv_n) \right\rangle$ is bounded. Next, let us  assume  that $0<\mu<\frac{1}{2}$. We have
 the following two estimates
\begin{equation*}
\left| \left\langle \frac{1}{\lambda_n^{1-2s}}A_1^{1-\mu}u_n,(|\lambda_n|^{2s} \lambda_n^{-1}h_n-iv_n) \right\rangle \right| \leq \frac{|A_1^{1-\mu}u_n|}{|\lambda_n|^{1-2s}}(|h_n|+|v_n|), 
\end{equation*}
the second one is obtained by applying an interpolation inequality:
\begin{equation*}
\frac{|A_1^{1-\mu}u_n|}{|\lambda_n|^{1-2s}} \leq C  \left(\frac{|A_1^{1-\frac{\mu}{2}}u_n|}{|\lambda_n|^{\frac{(1-\mu)(1-2s)}{1-2\mu}}}\right)^{\frac{1-2\mu}{1-\mu}} |A_1^{\frac{1}{2}}u_n|^{\frac{\mu}{1-\mu}}.
\end{equation*}
The combination of those two estimates leads to the following estimate, (keeping in mind that $\frac{(1-\mu)(1-2s)}{1-2\mu} \geq 1-s$):

\begin{equation*}
\left| \left\langle \frac{1}{\lambda_n^{1-2s}}A_1^{1-\mu}u_n,(|\lambda_n|^{2s} \lambda_n^{-1}|\lambda_n|^{2s} \lambda_n^{-1}h_n-iv_n) \right\rangle \right| \leq C\left(\frac{|A_1^{1-\frac{\mu}{2}}u_n|}{|\lambda_n|^{1-s}}\right)^{\frac{1-2\mu}{1-\mu}} |A_1^{\frac{1}{2}}u_n|^{\frac{\mu}{1-\mu}}(|h_n|+|v_n|).
\end{equation*}
Again, using Young inequality and the boundedness of  $|A_1^{\frac{1}{2}}u_n|$, $|h_n|$ and $|v_n|$, we find
\begin{equation}
\left| \left\langle \frac{1}{\lambda_n^{1-2s}}A_1^{1-\mu}u_n,(|\lambda_n|^{2s} \lambda_n^{-1}h_n-iv_n) \right\rangle \right| \leq \frac{1}{3}\left( \frac{|A_1^{1-\frac{\mu}{2}}u_n|}{|\lambda_n|^{1-s}}\right)^2+C.\label{4es2}
\end{equation}
Now, using (\ref{4es1}) and (\ref{4es2}) in (\ref{sec2'}), we derive that $\left( \frac{|A_1^{1-\frac{\mu}{2}}u_n|}{|\lambda_n|^{1-s}}\right)^2$ is bounded. Finally, recall that  $\frac{|A_1^{\mu \over2}v_n|}{|\lambda_n|^{s}}=o(1)$ (by (\ref{dissG}) and (\ref{b1})), then we get, using (\ref{csch3}), the estimate
\begin{equation}
\frac{1}{\lambda_n} \left\langle A_1u_n,v_n\right\rangle =o(1).\label{tt1}
\end{equation} 
Similarly, we will prove that $\frac{1}{\lambda_n} \left\langle A_2w_n,z_n\right\rangle
=o(1).$  
We start by the following estimate
\begin{equation*}
\left|\frac{1}{\lambda_n} \left\langle A_2w_n,z_n\right\rangle \right| \leq \frac{|A_2^{1-\frac{\theta}{2}}w_n|}{|\lambda_n^{1-s}|}\frac{|A_2^{\theta \over2}z_n|}{|\lambda_n^{s}|}.\label{csch4}
\end{equation*}
As for (\ref{tt1}), it suffices to show that $\frac{|A_2^{1-\frac{\theta}{2}}w_n|}{|\lambda_n^{1-s}|}$ is bounded (since $\frac{|A_2^{\theta \over2}z_n|}{|\lambda_n|^{s}}=o(1)$ by (\ref{dissG}) and (\ref{b2}) ). \\
Taking the inner product of (\ref{eq2G}) with $|\lambda_n|^{2s} \lambda_n^{-1}\frac{1}{\lambda_n^{1-2s}} A_2^{1-\theta}w_n$ we deduce that
\begin{eqnarray}
\frac{1}{\lambda_n^{2-2s}}|A_2^{1-\frac{\theta}{2}}w_n|^2=-\left \langle \frac{1}{\lambda_n^{1-s}} A_2^{1-\theta}w_n,\frac{1}{\lambda_n^{1-s}} \alpha B_1 v_n  \right\rangle -\left \langle \frac{1}{\lambda_n^{1-s}} A_2^{1-\theta}w_n,\frac{1}{\lambda_n^{1-s}}\beta  B_2
 z_n  \right\rangle\notag \\
+\left\langle \frac{1}{\lambda_n^{1-2s}}A_2^{1-\theta
}u_n,(|\lambda_n|^{2s} \lambda_n^{-1}k_n-iz_n) \right\rangle.\label{sec2''}
\end{eqnarray}
First, we have the following two estimates
\begin{eqnarray}
\left|\left \langle \frac{1}{\lambda_n^{1-s}} A_2^{1-\theta}w_n,\frac{1}{\lambda_n^{1-s}} \alpha B_1 v_n  \right\rangle  \right|= \left|\left \langle \frac{1}{\lambda_n^{1-s}} B_1^{1 \over2}\left( A_2^{1-\theta}w_n\right) ,\frac{1}{\lambda_n^{1-s}} \alpha B_1^{1 \over2} v_n  \right\rangle  \right|\notag\\
\leq C \frac{|A_2^{1-\frac{\theta}{2}}w_n|}{|\lambda_n|^{1-s}}.\frac{ |B_1^{\frac{1}{2}} v_n|}{|\lambda_n|^{1-s}}\label{est1}
\end{eqnarray}
and
\begin{eqnarray}
\left|\left \langle \frac{1}{\lambda_n^{1-s}} A_2^{1-\theta}w_n,\frac{1}{\lambda_n^{1-s}} \beta B_2 z_n  \right\rangle  \right|= \left|\left \langle \frac{1}{\lambda_n^{1-s}} B_2^{1 \over2}\left( A_2^{1-\theta}w_n\right) ,\frac{1}{\lambda_n^{1-s}} \beta B_2^{1 \over2} z_n  \right\rangle  \right|\notag\\
\leq C \frac{|A_2^{1-\frac{\theta}{2}}w_n|}{|\lambda_n|^{1-s}}.\frac{ |B_2^{\frac{1}{2}} z_n|}{|\lambda_n|^{s}},\label{est2}
\end{eqnarray}
where we have used that $1-s > s$, since $s\in (0,\frac{1}{2})$.\\
To estimate the last term in (\ref{sec2''}) we proceed as in (\ref{sec2'}): if $\theta \geq \frac{1}{2}$, then  $\frac{1}{\lambda_n^{1-2s}}A_2^{1-\theta}w_n$ is bounded, so that $\left\langle \frac{1}{\lambda_n^{1-2s}}A_2^{1-\theta}w_n,(|\lambda_n|^{2s} \lambda_n^{-1}k_n-iz_n) \right\rangle$ is bounded. If $0<\theta<\frac{1}{2}$: we have
 the following two estimates: the first is 
\begin{equation*}
\left| \left\langle \frac{1}{\lambda_n^{1-2s}}A_2^{1-\theta}w_n,(|\lambda_n|^{2s} \lambda_n^{-1}k_n-iz_n) \right\rangle \right| \leq \frac{|A_2^{1-\theta}w_n|}{|\lambda_n|^{1-2s}}(|k_n|+|z_n|), 
\end{equation*}
the second, is obtained by applying an interpolation inequality:
\begin{equation*}
\frac{|A_2^{1-\theta}w_n|}{|\lambda_n|^{1-2s}} \leq C \left(\frac{|A_2^{1-\frac{\theta}{2}}w_n|}{|\lambda_n|^{\frac{(1-\theta)(1-2s)}{1-2\theta}}}\right)^{\frac{1-2\theta}{1-\theta}} |A_2^{\frac{1}{2}}w_n|^{\frac{\theta}{1-\theta}},
\end{equation*}
to get, (keeping in mind that $\frac{(1-\theta)(1-2s)}{1-2\theta} \geq 1-s$),
the following estimate
\begin{equation}
\left| \left\langle \frac{1}{\lambda_n^{1-2s}}A_2^{1-\theta}w_n,(|\lambda_n|^{2s} \lambda_n^{-1}k_n-iz_n) \right\rangle \right| \leq C\left(\frac{|A_2^{1-\frac{\theta}{2}}w_n|}{|\lambda_n|^{1-s}}\right)^{\frac{1-2\theta}{1-\theta}} |A_2^{\frac{1}{2}}w_n|^{\frac{\theta}{1-\theta}}(|k_n|+|z_n|).\label{est3}
\end{equation}
Again, using appropriate Young inequality at (\ref{est1},\ref{est2}) and (\ref{est3}) we deduces from (\ref{sec2''}) the boundedness of  $\left( \frac{|A_2^{1-\frac{\theta}{2}}w_n|}{|\lambda_n|^{1-s}}\right)^2$ is bounded. Finally, recall that  $\frac{|A_2^{\theta \over2}z_n|}{|\lambda_n|^{s}}=o(1)$, then we get by (\ref{csch4}), the estimate
\begin{equation}
\frac{1}{\lambda_n} \left\langle A_2w_n,z_n\right\rangle =o(1). 
\end{equation}
\end{proof}
\section{An optimal result}

In this section we suppose that $A_1=A_2:=A$, $B_1:=A^{\mu}$, $B_2:=A^{\theta}$, and instead of  (\ref{b12}), we take $\mu \leq \theta$, which implies (\ref{b12}).
\begin{thm}
The resolvent estimate (\ref{sec6}) is optimal, in the sense that, for every $\mu,\theta \in (0,1]$ such that $\mu\in(0,\frac{1}{2})$ and $\mu\leq \theta$, for every  $r\in (2\mu, 1]$, we have:
\begin{equation}
\limsup_{|\lambda|\rightarrow\infty}|\lambda|^{r}\|(i\lambda I-\mathcal{A}_{\mu,\theta})^{-1}\|_{\mathcal{L}(\mathcal{H})}=\infty.\label{sec7}
\end{equation}
\end{thm}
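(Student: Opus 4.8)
The plan is to exploit the fact that, under the present hypotheses $A_1=A_2=:A$, $B_1=A^\mu$, $B_2=A^\theta$, the generator $\mathcal{A}_{\mu,\theta}$ is block-diagonalized by the spectral decomposition of $A$. Since $A$ is self-adjoint, strictly positive and has compact resolvent (the embedding $V_1\hookrightarrow H$ being compact), there is an orthonormal basis $(\phi_k)_{k\ge1}$ of $H$ with $A\phi_k=a_k\phi_k$, $0<a_1\le a_2\le\cdots$ and $a_k\to\infty$. Each four-dimensional subspace $\mathcal{H}_k=\{(c_1\phi_k,c_2\phi_k,c_3\phi_k,c_4\phi_k):c_j\in\mathbb{C}\}$ lies in $D(\mathcal{A}_{\mu,\theta})$ and is invariant, $\mathcal{A}_{\mu,\theta}$ acting there through
\[
M_{a_k}=\begin{pmatrix}0&1&0&0\\ -a_k&-\alpha a_k^\mu&0&-\beta a_k^\mu\\ 0&0&0&1\\ 0&-\beta a_k^\mu&-a_k&-\gamma a_k^\theta\end{pmatrix}.
\]
Because the resolvent is block-diagonal, it suffices to produce, for a well-chosen sequence $\lambda_n\to+\infty$, a single block and a single test vector on which $(i\lambda_n-\mathcal{A}_{\mu,\theta})$ is nearly degenerate. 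As $\mu\le\theta$, the first oscillator carries the weaker damping $a^\mu\le a^\theta$, so it is the one that should govern the blow-up of the resolvent; I would therefore concentrate the test vector in the first two components.

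Concretely, I would take $a_n\to\infty$ along the eigenvalues of $A$, set $\lambda_n=\sqrt{a_n}$ (so $|\lambda_n|^2=a_n$), and choose the unit vector
\[
U_n=\Big(\tfrac{1}{\sqrt{2a_n}}\,\phi_n,\ \tfrac{i}{\sqrt{2}}\,\phi_n,\ 0,\ 0\Big)\in D(\mathcal{A}_{\mu,\theta}),\qquad \|U_n\|=1,
\]
the second component being $i\lambda_n$ times the first so that the first line of $(i\lambda_n-\mathcal{A}_{\mu,\theta})U_n$ vanishes. A direct computation, using $A\phi_n=a_n\phi_n$, $A^\mu\phi_n=a_n^\mu\phi_n$ and $(i\lambda_n)^2=-a_n$, then gives
\[
(i\lambda_n-\mathcal{A}_{\mu,\theta})U_n=\Big(0,\ \tfrac{i\alpha}{\sqrt2}\,a_n^\mu\,\phi_n,\ 0,\ \tfrac{i\beta}{\sqrt2}\,a_n^\mu\,\phi_n\Big),
\]
whose energy norm equals $\sqrt{(\alpha^2+\beta^2)/2}\;a_n^\mu$. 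Since $i\mathbb{R}\subset\rho(\mathcal{A}_{\mu,\theta})$, applying $(i\lambda_n-\mathcal{A}_{\mu,\theta})^{-1}$ to this residual recovers $U_n$, whence
\[
\|(i\lambda_n-\mathcal{A}_{\mu,\theta})^{-1}\|_{\mathcal{L}(\mathcal{H})}\ \ge\ \frac{\|U_n\|}{\|(i\lambda_n-\mathcal{A}_{\mu,\theta})U_n\|}\ \ge\ \frac{\sqrt2}{\sqrt{\alpha^2+\beta^2}}\,a_n^{-\mu}\ =\ C\,|\lambda_n|^{-2\mu}.
\]

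Multiplying by $|\lambda_n|^r$ yields $|\lambda_n|^r\|(i\lambda_n-\mathcal{A}_{\mu,\theta})^{-1}\|\ge C\,|\lambda_n|^{\,r-2\mu}=C\,a_n^{(r-2\mu)/2}$, which tends to $+\infty$ for every $r>2\mu$ because $a_n\to\infty$; this proves \eqref{sec7}. Note that this lower bound matches exactly the upper rate $|\lambda|^{-2\mu}$ from Theorem \ref{th2} (with $s=\mu$), so the resolvent decay is sharp and, in particular, $r=1$ fails, consistently with the loss of analyticity when $\mu<\frac12$. The one point that needs genuine care is the residual computation: the coupling term $\beta B_1 y_t=\beta a_n^\mu v$ feeds the nonzero entry $\tfrac{i\beta}{\sqrt2}a_n^\mu\phi_n$ into the fourth line, and I must check that it is only of order $a_n^\mu$ — the same order as the damping entry $\tfrac{i\alpha}{\sqrt2}a_n^\mu\phi_n$ in the second line — so that the total residual remains $O(a_n^\mu)$ and does not spoil the estimate. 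This is indeed the case, since with the third and fourth components of $U_n$ set to zero no $a_n$- or $a_n^\theta$-sized contribution is produced; the weaker-damped mode is thus correctly isolated and the parameter $\theta$ plays no role in the bound.
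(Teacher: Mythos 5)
Your proof is correct and follows essentially the same route as the paper: a quasimode $Z_n$ supported on a single eigenfunction of $A$ with $\lambda_n=\sqrt{a_n}$, producing a residual of order $a_n^{\mu}=\lambda_n^{2\mu}$ and hence a resolvent lower bound $C|\lambda_n|^{-2\mu}$. The only difference is cosmetic: the paper tunes the ratio of the two oscillators' amplitudes so that the fourth component of the residual vanishes exactly, whereas you set the second oscillator to zero and simply observe that the coupling term $\beta B_1 v_n$ contributes only another $O(a_n^{\mu})$ term, which is a slightly cleaner way to reach the same estimate.
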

\begin{proof} 
We are going to show that there exist a sequence of  positive real numbers  $(\lambda_n)_{n\geq1}$, and for each $n$, an element $Z_n\in {\cal D}({\cal A})$
such that for every $r\in(2\mu,1]$, one has:
\begin{align}\label{op1}
\lim_{n\to\infty}\lambda_n=\infty,\quad ||Z_n||=1, \quad \lim_{n\to\infty}\lambda_n^{-r}||(i\lambda_n-{\cal A}_{\mu,\theta})Z_n||=0.
\end{align}
Indeed, if we have sequences $\lambda_n$ and $Z_n$ satisfying \eqref{op1}, then we set
\begin{align}\label{op2}
V_n=\lambda_n^{-r}(i\lambda_n-{\cal A}_{\mu,\theta})Z_n,\qquad U_n=\frac{V_n}{||V_n||}.
\end{align}
 Therefore,
$||U_n||=1$ and
\begin{align}\label{op3}
\lim_{n\to\infty}\lambda_n^r||(i\lambda_n-{\cal A}_{\mu,\theta})^{-1}U_n||=\lim_{n\to\infty}\frac{1}{||V_n||}=\infty,
\end{align}
which would establish the claimed result.
 Thus, it remains to prove the existence of such sequences.

\medskip

 For each
$n\geq1$, let $e_n$ be the eigenfunction of the operator $A$, and $\omega_n$ be its corresponding eigenvalue as in the proof of \cite[Theorem 1.2]{kfl}. As in that proof, we seek $Z_n$ in
the form $Z_n=(a_ne_n,i\lambda_na_ne_n,c_ne_n,i\lambda_nc_ne_n)$, with $\lambda_n$ and the complex numbers
$a_n$ and $c_n$ chosen such that $Z_n$ fulfills the
desired conditions.

\medskip

Set
\begin{align}\label{op4}
\lambda_n=\sqrt { \omega_n}.
\end{align}
With that choice, we readily check that:
\begin{equation}\label{op5}\begin{array}{lll}
(i\lambda_n-{\cal A}_{\mu,\theta})Z_n&= \begin{pmatrix}
   0\\
\left[(\omega_n-\lambda_n^2)a_n+i\lambda_n\omega_n^{\mu}(\alpha a_n+\beta c_n)\right]e_n\\
0\\ (-\lambda_n^2+\omega_n)c_n+i\lambda_n(\beta \omega_n^\mu a_n+\gamma w_n^\theta c_n))
e_n
 \end{pmatrix}\\
&= \begin{pmatrix}
   0\\
i\omega_n^{\mu+{1\over2}}\left[\alpha a_n+\beta c_n\right]e_n\\
0\\ \left(i\gamma\omega_n^{\theta-{1\over2}} c_n+i\beta\omega_n^{\mu-{1\over2}} a_n\right)\omega_ne_n
 \end{pmatrix},\text{ by }\eqre{op4}.\\
\end{array}\end{equation}
For each $n\geq1$, we set
\begin{align}\label{op6}
a_n=-\gamma\beta^{-1}\omega_n^{\theta-\mu}c_n.\end{align}
 It then follows from \eqref{op5}:

 \begin{equation}\label{op7}\begin{array}{lll}
(i\lambda_n-{\cal A}_{\mu,\theta})Z_n&= \begin{pmatrix}
   0\\
\left(-\alpha\gamma\beta^{-1}w_n^{\theta-\mu}+\beta \right) i\omega_n^{\mu+{1\over2}}c_ne_n\\0\\0
 \end{pmatrix} 
\end{array}
\end{equation}
 There exists $k>0$ such that
\begin{align}\label{op8}
&\lim_{n\to\infty}\lambda_n^{-2r}||(i\lambda_n-{\cal A}) Z_n||^2\notag\\
&=\lim_{n\to\infty}k^2|c_n|^2\lambda_n^{-2r}\omega_n^{2\theta+1} |e_n|^2 =k^2\lim_{n\to\infty}\omega_n^{2\mu-r}\omega_n^{2\theta-2\mu+1}|c_n|^2\\
&=0,\text{ for } r>2\mu, \text{ and }\notag
\end{align}provided that the sequence $(\omega_n^{2\theta-2\mu+1}|c_n|^2)$ converges to  some nonzero real number, and $||Z_n||=1$.\\
 One checks that
\begin{align}\label{op9}
||Z_n||^2&=\omega_n|a_n|^2+\lambda_n^2|a_n|^2+\omega_n|c_n|^2+\lambda_n^2|c_n|^2
\notag\\
&=2\omega_n|a_n|^2+2\omega_n|c_n|^2\notag\\
&=2\gamma^2\beta^{-2}w_n^{2\theta-2\mu}\omega_n|c_n|^2+2\omega_n|c_n|^2\\
&=2\left(\gamma^2\beta^{-2}+w_n^{-2(\theta-\mu)}\right)\omega_n^{2\theta-2\mu+1}|c_n|^2,\notag\end{align} 
so that we might just choose
\begin{align}\label{op10}c_n={\omega_n^{ \mu-\theta-\frac{1}{2}} \over\sqrt{2\left( \gamma^2\beta^{-2}+w_n^{2\mu-2\theta}\right) }}\end{align} to get $||Z_n||=1$ as desired.
\end{proof}

\section{Examples of application}

Let $\Omega$ be a bounded domain in ${\bb R}^N$ with smooth boundary $\Gamma$. Typical examples of application include, but are not limited to
\begin{enumerate}
\item {\bf Interacting membranes}
\begin{equation*}\begin{array}{ll}
&y_{tt}-a\Delta y+\alpha(-\Delta)^\mu y_t+\beta(-\Delta)^\mu z_t=0\text{ in }\Omega\times(0,\infty)\\
&z_{tt}-b\Delta z+\beta(-\Delta)^\mu y_t+\gamma(-\Delta)^\theta z_t=0\text{ in }\Omega\times(0,\infty)\\
&y=0,\quad z=0\text{ on }\Gamma\times(0,\infty),
\end{array}\end{equation*}

where $\alpha, \beta, \gamma, \mu$ and $\theta$ are as in Section \ref{sec1}.

\medskip

Here, $H=L^2(\Omega)$, $A_1=A_2:=A=-\Delta, B_1 = (-\Delta)^\mu, B_2 = (-\Delta)^\theta$ with $D(A)= H^2(\Omega) \cap H^1_0 (\Omega)$. Then $A$ is a densely defined, positive unbounded operator on the Hilbert space $H$. Moreover, $V:=D(A^{1 \over2})=H_0^1 (\Omega)$ and the injections $V \hookrightarrow H \hookrightarrow V^\prime=H^{-1}(\Omega)$ are dense and compact.

\medskip

Then the corresponding semigroup is 

\begin{itemize}
\item
analytic, according to Theorem \ref{th1}, for $1/2 \leq \mu,\theta \leq 1.$
\item
in Gevrey class $\delta > \frac{1}{2\mu}$, according to Theorem \ref{th2}, for $\mu\in (0,1/2), \theta \in (0,1]$ and $\mu \leq \theta$. 
\end{itemize}

\item{\bf Interacting membrane and plate}
\begin{equation*}\begin{array}{ll}
&y_{tt}-\Delta y+\alpha(-\Delta)^\mu y_t+\beta(-\Delta)^\mu z_t=0\text{ in }\Omega\times(0,\infty)\\
&z_{tt}+\Delta^2 z+\beta(-\Delta)^\mu y_t+\gamma\Delta^{2\theta} z_t=0\text{ in }\Omega\times(0,\infty)\\
&y=0,\quad z=0,\quad \frac{\partial z}{\partial\nu}=0\text{ on }\Gamma\times(0,\infty),
\end{array}\end{equation*}

where $\alpha, \beta, \gamma, \mu$ and $\theta$ as in Section \ref{sec1}. 

\medskip

Here, $H=L^2(\Omega)$, $A_1=-\Delta, A_2 = \Delta^2, B_1 = (-\Delta)^\mu, B_2 = \Delta^{2\theta}$ with $D(A_1)= H^2(\Omega) \cap H^1_0 (\Omega), D(A_2) = H^4(\Omega) \cap H^2_0(\Omega)$. Then $A_i, i=1,2,$ are densely defined, positive unbounded operators on the Hilbert space $H$. Moreover, $V_1:=D(A_1^{1 \over2})=H_0^1 (\Omega), V_2 = H^2_0 (\Omega)$ and the injections $V_i \hookrightarrow H \hookrightarrow V_i^\prime, V_1^\prime = H^{-1}(\Omega), V_2=H^{-2}(\Omega),$ are dense and compact.

\medskip

Then the corresponding semigroup is 

\begin{itemize}
\item
analytic, according to Theorem \ref{th1}, for $1/2 \leq \mu, \theta \leq 1.$
\item
in Gevrey class $\delta > \frac{1}{2\min(\mu,\theta)}$, according to Theorem \ref{th2}, for $\mu, \theta \in (0,1]$ and $\min(\mu,\theta) \in(0,1/2).$ 
\end{itemize}

\item{\bf Interacting  plates}
\begin{equation*}\begin{array}{ll}
&y_{tt}+\Delta^2 y+\alpha\Delta^{2\mu} y_t+\beta\Delta^{2\mu} z_t=0\text{ in }\Omega\times(0,\infty)\\
&z_{tt}+\Delta^2 z+\beta\Delta^{2\mu} y_t+\gamma\Delta^{2\theta} z_t=0\text{ in }\Omega\times(0,\infty)\\
&y=0,\quad \frac{\partial y}{\partial\nu}=0,\quad z=0,\quad \Delta z=0\text{ on }\Gamma\times(0,\infty),
\end{array}\end{equation*}
\end{enumerate}

where $\alpha, \beta, \gamma, \mu$ and $\theta$ are as in Section \ref{sec1}. 

\medskip

Here, $H=L^2(\Omega)$, $A_1=\Delta^2$ with $D(A_1)= H^4(\Omega) \cap H^2_0 (\Omega),$ and $ B_1 = A_1^{\mu},$ while $A_2=\Delta^2$ with $D(A_2)=\{u\in H^4(\Omega); u=\Delta u=0\text{ on }\Gamma\}$, and $ B_2 = A_2^{\theta}$. The operators $A_1$, $A_2$, $B_1$ and $B_2$ satisfy all the desired requirements.
\medskip

\noindent
The the corresponding semigroup is 

\begin{itemize}
\item
analytic, according to Theorem \ref{th1}, for $1/2 \leq \mu \leq \theta \leq 1.$
\item
in Gevrey class $\delta > \frac{1}{2\mu}$, according to Theorem \ref{th2}, for $\mu\in (0,1/2), \theta \in (0,1]$ and $\mu \leq \theta$. 
\end{itemize}

\end{document}